\documentclass[11pt]{article}
\usepackage{amsmath}
\usepackage{amssymb,amsbsy,amsthm}
\usepackage{graphicx}
\usepackage[dvipsnames]{xcolor}
\usepackage{mathtools}
\usepackage{enumerate}
\usepackage{cases}
\usepackage[margin=1in]{geometry}
\usepackage{pdfsync}
\usepackage{hyperref}
\usepackage{wrapfig} 
\usepackage{bbm}

\allowdisplaybreaks[1]

\numberwithin{equation}{section}

\newcommand*{\email}[1]{\href{mailto:#1}{#1}}

\newcommand{\p}{\mathbb{P}} 

\newtheorem{theorem}{Theorem}[section]
\newtheorem{lemma}[theorem]{Lemma}
\newtheorem{proposition}[theorem]{Proposition}
\newtheorem{claim}[theorem]{Claim}

\newtheorem{definition}[theorem]{Definition}

\newtheorem{remark}[theorem]{Remark}

\begin{document}

\title{Law of large numbers for activated random walk on villages}
    
\author{
    Balázs~Ráth \footnote{Department of Stochastics, Institute of Mathematics, Budapest University of Technology and Economics,
M\H{u}egyetem rkp.\ 3., H-1111 Budapest, Hungary} \footnote{HUN-REN Alfr\'ed R\'enyi Institute of Mathematics, Re\'altanoda utca 13-15, H-1053 Budapest,
Hungary } 
\email{rathb@math.bme.hu} \and
    Jacob~Richey\footnotemark[2]. \email{jrichey@renyi.hu}\and
    Miklós~Salánki\footnotemark[1]. \email{salanki.miklos@edu.bme.hu}
    }

\maketitle

\begin{abstract} 
We consider activated random walk (ARW), an interacting particle system and prototypical model of self-organized criticality in a setting which combines mean-field behavior with the geometry of an arbitrary graph, which we call the {\em{village model of ARW}}, or VARW for short. VARW is obtained from a fixed graph by replacing each vertex with a 'village' that consists of $n$ replicas of that vertex. We focus on VARW where particles walk according to a strictly sub-stochastic transition kernel on a finite underlying graph, so mass is sometimes lost (which guarantees that the system eventually stabilizes almost surely). Under a subcriticality assumption on the initial state we prove a law of large numbers as  $n$ goes to infinity for the resulting stable configuration of particles and the odometer of the process, to a limit which is uniquely characterized by a system of non-linear equations.
\end{abstract}

\section{Introduction} 
	\subsection{Background}
	Motivated by \emph{self-organized criticality} (SOC), a statistical property observed in real-world systems including forest fires, earthquakes, and electrical networks that describes the size of cascading avalanches by a power law, physicists Bak, Tang and Weisenfeld introduced a number of interacting particle models in the 1980s. Dhar carried out the first in-depth study of one such model, named the abelian sandpile model for a commutative property it exhibits. {\emph{Activated random walk}} (ARW) is a stochastic particle model which shares many features of the abelian sandpile -- most importantly, its abelian property -- and which conjecturally (and experimentally) exhibits the key features of SOC. ARW has a number of technical advantages that make it tractable, but many conjectured SOC properties are still unproved for ARW. Two open problems of particular interest are the existence of a (spherical) limit shape for ARW on $\mathbb{Z}^d$ \cite[Conjecture 1]{levineSilvestri2024}, and universality of the critical density \cite[Conjecture 15]{levineSilvestri2024}. 

    The focus of this work is a variant of ARW we call the village model of activated random walk (VARW), which combines properties of ARW on the complete graph with ARW on an arbitrary graph. Our main result is a law of large numbers (LLN) which describes the final stable configuration of sub-critical ARW in terms of a system of equations. In forthcoming work with Antal A.\ J\'arai we plan to prove a shape theorem for VARW in $\mathbb{Z}^d$, and to study the universality of critical density in the context of the stabilization/metastability dichotomy in the fixed energy version of VARW.

    Contrary to most existing work on ARW, our proof does not rely heavily on the abelian property (see e.g. \cite[Section 2]{rolla2020}) to construct specialized toppling sequences. Rather, we prove that the total odometer function, which counts the amount of activity at each vertex, is the fixed point of a map (which we call the single-loop odometer function)
    which has a certain contractive property. This observation greatly simplifies the proof of our main theorem, and we believe it may be useful for studying ARW more generally. 

    The VARW has many features in common with the divisible sandpile model, for which many of the conjectured properties of ARW and sandpile models are proved rigorously, including the existence of a spherical limit shape and the behavior of the model at the critical density \cite{bourabee2026, levineperes2009, levinemuruganperesugurcan2016}. Detailed results related to the critical density are also known for ARW on a large complete graph \cite{dickman2010, jarai2023, junge2026scaling} and for $\mathbb{Z}^d$ when $d \to \infty$ \cite{junge2025mf}. Similar settings to the VARW for general particle models have appeared in the recent works \cite{kaiser2026, faul2025}.

\section{Preliminaries} Let $\mathbb{Z}$ denote the set of integers.
Let $\mathbb{N}_+:=\{1,2,\dots\}$, $\mathbb{N}_0:=\{0,1,2,\dots \} $.

Let $\mathbb{R}_+ :=[0,+\infty)$ and $[n]:=\{1,2, \dots, n\}$. Given a finite set $V$ and a vector $w \in \mathbb{R}^V$,  we define its sup-norm by $\Vert w\Vert_{\infty}=\max_{x \in V} |w_x| $ and its $\ell^1$-norm by $\Vert w\Vert_{1}=\sum_{x \in V} |w_x| $.

\subsection{The continuous-time VARW}\label{ss_vm_jk_sr}

   Fix a finite set $V$ (the set of villages). Let  $P=(P_{x,y})_{x,y \in V}$ denote a strictly sub-stochastic and irreducible transition kernel: that is, we assume  $P_{x,y} \geq 0$ for all $x,y \in V$,  $\sum_{y \in V}P_{x,y} \leq 1$ for all $x \in V$, $\sum_{y \in V}P_{x,y} < 1$ for at least one $x \in V$, and for any $x,y \in V$ there exists $\ell \in \mathbb{N}_0$ such that $P^\ell_{x,y} >0$. We also fix a vector $\lambda = \left( \lambda_x \right)_{x \in V} \in \mathbb{R}_+^{V}$, where $\lambda_x$ denotes the sleep rate in village $x$, an input parameter for the VARW dynamics.

Given $V$, $P$, $\lambda$ and a positive integer $n$, the set of sites is the Cartesian product
\begin{equation} V_n = V \times [n] = \{ \, (x, i): x \in V, \; i \in [n] \,\}. \end{equation}
We call $(x,i)$ the $i$'th house in village $x$. Each village has $n$ houses. 

   We now define the rules of the continuous-time ARW particle process on $V_n$ (this is simply the adaptation of the usual rules of ARW to our setting).
    The state at time $t$ consists of some number of particles in each house. Each particle is either active or sleepy. All 
      particles in houses with two or more particles are always active, while particles that are alone in a house may be sleepy or active. Each active particle jumps at rate 1. A particle that jumps from a house in  village $x$ 
      either disappears  with probability $1-\sum_{y \in V} P_{x,y}$ or
      chooses village $y$ with probability $P_{x,y}$ and then picks a house in village $y$ uniformly  and  lands there. In other words, an active particle in house $(x,i)$ jumps out of the system at rate $1-\sum_{y \in V} P_{x,y}$ and jumps
      to house $(y,j)$ at rate $\frac{1}{n}P_{x,y}$. Each lonely particle in  village $x$ goes to sleep at rate $\lambda_x$. Sleeping particles do not move, but a sleeping particle immediately wakes up and becomes active  whenever another active particle jumps to the same house.

\subsection{Initialization and stabilization} \label{sec:arwstab}

We will run the VARW dynamics started from a specific (random) initial configuration of particles. Fix $\sigma_x \in [0,1]$ and $\nu_x \in \mathbb{R}_+$ for each $x \in V$, and let 
\begin{equation}\label{eq_sigma_nu_initial}
\sigma=(\sigma_x)_{x \in V}, \qquad \nu=(\nu_x)_{x \in V}.    
\end{equation}

The initial configuration consists of both sleepy and active particles: for each $x \in V$ we choose
$\lfloor \sigma_x n \rfloor$ houses in village $x$ (the exact location of these houses will not matter) and put one sleepy particle in each of these houses; and we add $\lfloor \nu_x n \rfloor$
many active particles to village $x$, choosing a house for each of them independently and uniformly.
 
Recall that the jump kernel $P$ is assumed to be irreducible and strictly sub-stochastic, so that a particle that jumps around too much will eventually disappear from the system. This guarantees that the VARW particle system almost surely terminates in a final configuration where each house contains at most one sleeping particle. We will refer to this process as {\em{stabilization}}, and call the resulting particle configuration (consisting of only sleeping particles) {\em{stable}}. 
Our main theorem will describe this final stable configuration of particles when $n$ is very large. To this end, for each $x \in V$ we define 
\begin{equation}\label{sleepy_S_n_def_eq}
S_*^{(n)}(x)=  S_*^{(n)}(\nu, \sigma; x) := \begin{array}{c} \text{ number of sleepy particles in village $x$} \\ \text{in the final stable configuration}\end{array} \end{equation}
and the jump odometers
\begin{equation}\label{def_eq_M_star_jump_x} 
M_*^{(n)}(x)=
M_*^{(n)}(\nu, \sigma; x) :=\begin{array}{c}
   \text{   number of times a particle jumped from  } \\
     \text{ a house in village  $x$ during stabilization.}
\end{array} \end{equation}
Denote these  $\mathbb{N}_0^V$-valued random variables by 
 \begin{equation}\label{def_eq_M_star_jump}
M_*^{(n)}=\left(  M_*^{(n)}(x) \right)_{x \in V} \quad \text{and} \quad S_*^{(n)}=\left(  S_*^{(n)}(x) \right)_{x \in V}.
\end{equation}

\section{Statement of main result: Law of Large Numbers} 

Our main result proves that the scaled quantities
\begin{equation}\label{eq_mn_sn}
m_*^{(n)} := \frac{1}{n}  M_*^{(n)} \qquad \text{ and } \qquad s_*^{(n)} := \frac{1}{n}  S_*^{(n)}  \end{equation} 
converge to deterministic limits $s_* \in [0,1]^V$ and $m_* \in \mathbb{R}_+^V$ as $n \to \infty$. We also formulate a system of equations for the limit vectors $s_*$ and $m_*$, and prove that under a sub-criticality type assumption on the initial configuration, this system of equations uniquely determines the solution $(s_*, m_*) \in [0,1]^V \times \mathbb{R}_+^V$. 

\begin{theorem}[Law of large numbers] \label{thm:LLN} Given $V$,  $P$, $\lambda$ as in Section \ref{ss_vm_jk_sr} and initial conditions
$\sigma$ and $\nu$ (cf.\ \eqref{eq_sigma_nu_initial}) that also satisfy $\sigma_x \leq \frac{\lambda_x}{1+\lambda_x}$ for all $x \in V$, the system of equations
\begin{eqnarray} 
\label{balance} 
s_x  & = & - m_x + \sigma_x +\nu_x + \sum_{y} m_y P_{y,x}, \qquad  x \in V \\ 
\label{lastExit} 
s_x & = & \sigma_x e^{-(\nu_x + \sum_y m_y P_{y,x})} + \frac{\lambda_x}{1+\lambda_x} \left(1-e^{-(\nu_x + \sum_y m_y P_{y,x})}\right),  \qquad  x \in V, \end{eqnarray}
has a unique solution $s_* \in [0,1]^V$ and $m_* \in \mathbb{R}_+^V$ and the random vectors $s_*^{(n)}$ and $m_*^{(n)}$ converge in probability to the deterministic limits $s_*$ and $m_*$.
\end{theorem}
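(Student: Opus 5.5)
We plan to follow the strategy announced in the introduction: characterize the odometer as the fixed point of a \emph{single-loop odometer map} and show that this map is contractive. The key point is that both the random system and its limit reduce to a fixed-point problem in the variable $m\in\mathbb{R}_+^V$.

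\textbf{Step 1: the deterministic map.} Write $h_x(m):=\nu_x+\sum_y m_yP_{y,x}$ for the number of particle arrivals (per $n$) into village $x$. Equation \eqref{lastExit} then gives $s_x=g_x(h_x(m))$ with
\[
g_x(h)=\sigma_x e^{-h}+\frac{\lambda_x}{1+\lambda_x}\bigl(1-e^{-h}\bigr).
\]
Substituting this into \eqref{balance} yields the fixed-point equation $m=F(m)$, where
\[
F_x(m):=\sigma_x+h_x(m)-g_x(h_x(m)).
\]
Since $\sigma_x\le\frac{\lambda_x}{1+\lambda_x}$, we have $0\le g_x'(h)=(\frac{\lambda_x}{1+\lambda_x}-\sigma_x)e^{-h}<1$. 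Hence $h\mapsto h-g_x(h)$ is nondecreasing and $1$-Lipschitz. It follows that $F$ is monotone and satisfies
\[
|F(m)-F(m')|\le P^{T}|m-m'|
\]
coordinatewise. Because $P$ is irreducible and strictly substochastic, its spectral radius is below $1$. So some iterate $F^{k}$ is a strict contraction in a weighted sup-norm (weighted by the Perron vector of $P^T$), and $F$ has a unique fixed point $m_*\in\mathbb{R}_+^V$. Setting $s_*=g(h(m_*))$ gives $s_*\in[0,1]^V$, which proves existence and uniqueness for the system \eqref{balance}–\eqref{lastExit}.

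\textbf{Step 2: the heuristic behind $g$.} Consider a single house in village $x$ that receives Poisson($h$) visits, the limiting law as $n\to\infty$. The site is left sleeping exactly when the last particle to visit it falls asleep before jumping. An isolated active particle sleeps before jumping with probability $\frac{\lambda_x}{1+\lambda_x}$. If the house receives no visits, which happens with probability $e^{-h}$, it keeps its initial sleeper with probability $\sigma_x$. This is the content of $g$, and \eqref{balance} is simply mass conservation in village $x$.

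\textbf{Step 3: the random single-loop map.} We will use the abelian property only in a mild form, via the site-wise stack representation. For a prescribed vector $M$ of jump counts out of each village, the destinations of those jumps determine the numbers of arrivals to each house. The local stacks at each house then determine the final state and the number of jumps emitted there. Define $\Phi^{(n)}(M)$ to be the vector of jumps produced when each house is stabilized given these inputs. The true odometer $M_*^{(n)}$ is then the least fixed point of $\Phi^{(n)}$, and $\Phi^{(n)}$ is monotone by the usual monotonicity of ARW stabilization.

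\textbf{Step 4: concentration of the random map.} For each fixed $m$, the arrivals $nh(m)$ are spread uniformly over the $n$ houses of each village. The houses are asymptotically independent, each receiving Poisson($h_x$) arrivals, and each house's contribution has bounded exponential moments. A law of large numbers over the houses therefore gives $\frac1n\Phi^{(n)}(nm)\to F(m)$ in probability. To upgrade this to uniform convergence on compacts, we combine monotonicity with a finite grid. Some care is needed because the destinations of jumps are coupled across different values of $m$; we will handle this by fixing the instruction stacks first, so that increasing $M$ only appends arrivals.

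\textbf{Step 5: sandwiching the odometer.} Start the iteration from $0$ and from a large upper vector $\bar m$ with $F(\bar m)\le\bar m$. By monotonicity, the odometer $m_*^{(n)}$ is sandwiched between $k$-fold iterates of $\frac1n\Phi^{(n)}(n\,\cdot)$ from below and from above. These iterates converge to $F^{k}(0)$ and $F^{k}(\bar m)$ respectively, both of which lie within $\varepsilon$ of $m_*$ for large $k$. The convergence of $s_*^{(n)}$ then follows from \eqref{balance}, using that the number of arrivals concentrates.

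\textbf{Main obstacle.} We expect the hardest part to be Step 4. One must make precise that the local stabilization outcome depends on the incoming particles in a monotone and asymptotically product-Poisson way, and control the dependence between houses uniformly over $m$ rather than for each fixed $m$. We would try Poissonization together with a coupling that reveals the jump destinations sequentially.
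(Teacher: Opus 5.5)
Your proposal is correct in outline, but the probabilistic part takes a genuinely different route from the paper's.

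\textbf{Step 1.} This is essentially the paper's argument. Your $F$ is exactly $\phi$, and your bound $|F(m)-F(m')|\le P^{T}|m-m'|$ already makes $F$ itself a $\mu$-contraction, in your weighted sup-norm or in the paper's $\eta$-weighted $\ell^1$ norm. So no iterate is needed.

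\textbf{The paper's route.} It uses neither monotonicity of $\Phi^{(n)}$ nor minimality of $M_*^{(n)}$; it only uses that $M_*^{(n)}$ is \emph{some} fixed point. It proves exponential concentration of $\phi^{(n)}(M/n)$ around $\phi(M/n)$ for each fixed $M$. The influx is handled by Hoeffding and the outflux by McDiarmid, after a stopping-time argument showing that the decisive landlord notice of each active house is an independent Bernoulli$(\frac{1}{1+\lambda_x})$. It then union-bounds over the $O(n^{|V|})$ lattice points with $\Vert M\Vert_1\le C_*n$. A small residual is turned into closeness via $\Vert m-m_*\Vert_\eta\le\frac{1}{1-\mu}\Vert m-\phi(m)\Vert_\eta$. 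The a priori bound $\Vert M_*^{(n)}\Vert_1\le C_*n$ comes the same way, by showing that with high probability no large $M$ is a fixed point. This yields the stronger conclusion that every fixed point of $\phi^{(n)}$ is near $m_*$, with explicit tails.

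\textbf{Your route.} Your monotone sandwich trades this quantitative input for structure. Convergence in probability at finitely many grid points suffices; second moments of the occupancy counts are enough, though you still need the last-notice independence to identify the mean as $F(m)$. The a priori bound comes from one supersolution, and contraction only serves to identify the limits of the two monotone iterations.

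\textbf{What you still owe.} You assert two facts that must be proved, since the airplane and taxi stacks are shared per village and the usual site-wise ARW results do not apply verbatim.
\begin{enumerate}
\item Monotonicity of $\Phi^{(n)}$ for fixed stacks. This is true: a larger $M$ only appends tickets, and a house emits $0$ jumps without active arrivals and $T_{x,i}-1+\kappa_{N_{x,i}+1,(x,i)}$ otherwise, which is nondecreasing in $T_{x,i}$.
\item $M_*^{(n)}\le M'$ whenever $\Phi^{(n)}(M')\le M'$. This gives both minimality and the upper sandwich. One way: consider the first jump of the true dynamics that would push some village count above $M'_x$, and use that partial stabilization of a house is a prefix of its single-loop stabilization.
\end{enumerate}
Also choose $\bar m$ with $F(\bar m)<\bar m$ strictly in every coordinate, e.g.\ a large multiple of the Perron vector of $P^T$. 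The condition $F(\bar m)\le\bar m$ alone does not ensure $\Phi^{(n)}(\lfloor n\bar m\rfloor)\le\lfloor n\bar m\rfloor$ with high probability.
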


\begin{remark}
The equations \eqref{balance}$\&$\eqref{lastExit} have the following simple heuristic meanings. Equation \eqref{balance} is a mass balance statement, capturing the fact that at each village the final mass is equal to the initial mass plus the influx minus the outflux. Equation \eqref{lastExit} is a local condition for the probability $s_x$ that there is a sleeping particle in house $i$ in village $x$ in the final configuration. Namely, assuming $\frac{1}{n} M_*^{(n)} \approx m$ for some deterministic $m \in \mathbb{R}_+^V$, the total number of particles that land in house $i$ of village $x$ in the entire process can be approximated by Poisson distribution with mean $\nu_x + \sum_y m_y P_{y,x}$ if $n \gg 1$. Then note that a given house has a sleeping particle in the terminal configuration if either the influx was zero and it had a sleeping particle to begin with, or if there was a positive influx and some particle was left sleeping there. Equation \eqref{lastExit} gives an approximate expression for the probability $s_x$ of this event when $n \gg 1$.
\end{remark}

\begin{remark} The solution $s_* =s_*(\sigma,\nu)$ and $m_*=m_*(\sigma,\nu)$ of \eqref{balance}$\&$\eqref{lastExit} also satisfies $0 \leq s_*(x) \leq \frac{\lambda_x}{1+\lambda_x}$ for all $x$, just like $\sigma$.
It is straightforward to check  that the identities \begin{equation*}s_*(s_*(\sigma,\nu),\nu')=s_*(\sigma,\nu+\nu'), \qquad
m_*(\sigma,\nu)+ m_*(s_*(\sigma,\nu),\nu')=m_*(\sigma,\nu+\nu')\end{equation*}
hold: these are the continuum analogs of the abelian property of ARW. Also note that  $\sigma^c:=\left( \frac{\lambda_x}{1+\lambda_x}\right)_{x \in V}$ satisfies $\sigma^c=s_*(\sigma^c,\nu)$ for any $\nu \in \mathbb{R}^V_+$, i.e. $\sigma^c$ is the continuum analog of the stationary stable configuration of the driven-dissipative ARW. If $|V|=1$, i.e., if there is only one village then this density agrees with the critical density $\frac{\lambda}{1+\lambda}$ for ARW on the complete graph with sleep rate $\lambda$, cf.\ \cite{jarai2023, junge2026scaling}. 
\end{remark}

\section{Proof of Theorem \ref{thm:LLN}}

The proof of the LLN will go along the following lines: we will first re-write the limiting equations \eqref{balance} and \eqref{lastExit} as the fixed point of a function with nice contractivity properties. Then we will show that the observables $s_*^{(n)}$ and $m_*^{(n)}$ (cf.\ \eqref{eq_mn_sn}) approximately solve this fixed point problem when $n \gg 1$. 

Throughout this section, we fix  $V, P, \lambda, \nu, \sigma$ and suppress the dependence of our various objects on these parameters.

\subsection{Fixed point formulation} 

Equating the RHS of \eqref{balance} with the RHS of \eqref{lastExit} and rearranging, we obtain a useful fixed point equation for the jump odometer.
Having fixed $\sigma, \nu$ and $P$, for any $x \in V$ let us define the affine linear maps  $\beta: \mathbb{R}^V_+ \to \mathbb{R}^V_+$ and $s: \mathbb{R}^V_+ \to \mathbb{R}^V$ by $\beta(m)=\nu+mP$ and $s(m)=-m+\sigma+\beta(m)$, i.e.,
\begin{equation}\label{def_eq_beta}
    \beta(m)_x:= \nu_x +\sum_{y} m_y P_{y,x}, \qquad s(m)_x := - m_x + \sigma_x +\nu_x + \sum_{y} m_y P_{y,x}.
\end{equation}
\begin{definition}[Fixed point function of VARW]
Define the function $\phi:\mathbb{R}_+^{V} \rightarrow{\mathbb{R}^{V}}$ as follows: for $m \in \mathbb{R}_+^{V}$ and $x \in V$, set
\begin{equation} \label{fixptfun}
     \phi(m)_x = (\sigma_x - \frac{\lambda_x}{1+\lambda_x})\left( 1-e^{-\beta(m)_x}\right) + \beta(m)_x
\end{equation}
We call $\phi$ the fixed point function of the VARW model.
\end{definition} 

If  $\phi(m_*) = m_*$ and if we define $s_*:= s(m_*)$ (cf.\ \eqref{def_eq_beta}) 
then $(m_*, s_*)$ solves
\eqref{balance}$\&$\eqref{lastExit}. Conversely, if $(m_*, s_*)$ solves \eqref{balance}$\&$\eqref{lastExit}, then $\phi(m_*) = m_*$ and  $s_*=s(m_*)$. 

 Crucially, in the sub-critical setting, $\phi$ has a unique such fixed point: 

\begin{proposition}[Existence and uniqueness of fixed point] \label{thm:fpsol} If $\sigma_x \leq \frac{\lambda_x}{1+\lambda_x}$ for all $x \in V$  then $\phi$ maps $\mathbb{R}_+^V$ to $\mathbb{R}_+^V$ and  $\phi$ has a unique fixed point in $\mathbb{R}_+^V$, denoted by $m_* = m_*( P, \lambda,\sigma,\nu)$.
\end{proposition}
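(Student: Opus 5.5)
Write $c_x := \frac{\lambda_x}{1+\lambda_x} - \sigma_x \in [0,1]$, so that $\phi(m)_x = g_x(\beta(m)_x)$ with $g_x(b) := b - c_x(1-e^{-b})$. The plan is to exploit the scalar function $g_x$ on $\mathbb{R}_+$. We have $g_x(0)=0$ and $g_x'(b) = 1 - c_x e^{-b} \in [1-c_x, 1] \subseteq [0,1]$. Hence $g_x$ is nondecreasing, $1$-Lipschitz and nonnegative on $\mathbb{R}_+$. Since $\beta(m) = \nu + mP \in \mathbb{R}_+^V$ for $m \in \mathbb{R}_+^V$, it follows at once that $\phi(m) \in \mathbb{R}_+^V$, which is the first claim. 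Moreover $\phi$ is monotone in the coordinatewise order: $m \le m'$ implies $\beta(m)\le\beta(m')$, hence $\phi(m)\le\phi(m')$.

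For existence I would run monotone iteration from $m^{(0)} = 0$, setting $m^{(k+1)} = \phi(m^{(k)})$. Monotonicity gives $m^{(k)}$ nondecreasing, so it suffices to find an upper supersolution. Since $g_x(b)\le b$, we have $\phi(m)\le \nu + mP$. Because $P$ is irreducible and strictly sub-stochastic, its spectral radius is $<1$, so $G := \sum_{\ell\ge0}P^\ell$ converges and $\bar m := \nu G$ satisfies $\bar m = \nu + \bar m P$. Then $\phi(\bar m) \le \bar m$, and by induction $m^{(k)} \le \bar m$. The limit $m_*=\lim_k m^{(k)}$ exists, and by continuity of $\phi$ it is a fixed point.

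For uniqueness I would use the $1$-Lipschitz property. If $m, m'$ are both fixed points, then $|m_x - m'_x| = |g_x(\beta(m)_x) - g_x(\beta(m')_x)| \le |\beta(m)_x-\beta(m')_x| \le \sum_y |m_y-m'_y|P_{y,x}$. Writing $d = |m-m'|\ge 0$, this reads $d \le dP$ coordinatewise, and iterating gives $d \le dP^k \to 0$. Hence $d=0$.

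The step needing care is the claim that $P^k\to0$, equivalently that the spectral radius $\rho(P)<1$. This uses both irreducibility and the existence of a row with sum $<1$. One route is Perron--Frobenius: take a positive left eigenvector $u$ with $uP=\rho u$. Summing against the all-ones vector gives $\rho\sum_x u_x = \sum_x u_x\sum_y P_{x,y} < \sum_x u_x$, where strictness holds because $u>0$ and some row is deficient. The alternative is to note that $P^{|V|}$ has all row sums $<1$ by irreducibility. With this fact in hand, the rest of the argument is routine.
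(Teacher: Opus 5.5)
Your argument is correct, but it takes a different route from the paper. The paper gets existence and uniqueness together. From the directional derivative \eqref{dirder} and the bound \eqref{compl_abs_leq_1}, it shows that $\phi$ is a strict contraction with factor $\mu=\rho(P)$ for the weighted norm $\Vert w\Vert_\eta=\sum_x|w_x|\eta_x$, where $\eta$ is the right Perron eigenvector ($P\eta=\mu\eta$). It then applies Banach's fixed point theorem.

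You split the two tasks. Existence comes from monotone iteration starting at $0$, kept below the supersolution $\bar m=\nu G$. Uniqueness comes from iterating the coordinatewise domination $|m-m'|\le |m-m'|P$ and using $P^k\to 0$.

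The analytic input is the same in both: your bound $|g_x'|\le 1$ is precisely \eqref{compl_abs_leq_1}. The paper pairs it with $\eta$, whereas you iterate it. Your extra observation $g_x'\ge 0$ is what drives the monotone existence step.

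What your route buys:
\begin{itemize}
\item It needs only $\rho(P)<1$, not the eigenvector itself.
\item It gives extra information for free: $m_*=\lim_k\phi^k(0)$ is approached monotonically from below, and $0\le m_*\le \nu G$.
\end{itemize}

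What the paper's route buys: it directly produces the contraction constant and the norm that are reused in Lemma~\ref{lem:fpbound} and Lemma~\ref{lem:odotail}. Your framework recovers these as well. The same iteration gives the coordinatewise residual bound $|m-m_*|\le |m-\phi(m)|\,G$. Pairing this with $\eta$ and using $G\eta=\eta/(1-\mu)$ yields exactly \eqref{fixed_point_control}.
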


Before we prove Proposition \ref{thm:fpsol}, we define a relevant norm on $\mathbb{R}^V$.

\begin{definition}[Principal eigenvector norm]\label{def_eta_norm}
     By the Perron-Frobenius theorem, the principal eigenvalue $\mu$ of the strictly sub-stochastic and irreducible matrix $P$ satisfies $\mu \in (0,1)$, the multiplicity of $\mu$ is equal to $1$ and there is a unique right eigenvector $\eta$ with non-negative coordinates that  satisfies $P\eta=\mu \eta$, moreover $\Vert \eta \Vert_{\infty}=1$ and
     \begin{equation}\label{eta_min_def}
         \eta_{min} := \min_{x \in V} \eta_x >0.
     \end{equation}
      We define the $\eta$-norm of a vector $w\in \mathbb{R}^{V}$ by 
      \begin{equation}\label{eta_norm_def_eq}
      \Vert w \Vert_{\eta} = \sum_x |w_x| \cdot  \eta_x.\end{equation}
\end{definition}
Note that for all $w \in \mathbb{R}^V$ we have
\begin{equation}\label{norm_inequalities}
 \eta_{min} \cdot \Vert w \Vert_1 \leq \Vert w \Vert_{\eta} \leq \Vert w \Vert_1, \qquad   \Vert w \Vert_\infty \leq \Vert w \Vert_1 \leq |V| \cdot \Vert w \Vert_\infty.
\end{equation}

\begin{proof}[Proof of Proposition \ref{thm:fpsol}] First observe that  $\sigma_x \leq \frac{\lambda_x}{1+\lambda_x}$, $\nu_x \geq 0$ and $m_x \geq 0$ for all $x \in V$ together imply that $\phi(m)_x \geq 0$ for all $x \in V$, and thus $\phi$ maps $\mathbb{R}_+^{V}$ to itself.
We will show that  $\phi$ is a contraction on $\mathbb{R}_+^{V}$ with respect to the metric induced by the $\Vert \cdot \Vert_\eta$-norm.

 We compute the directional derivative $\phi^{'}_{v}(m)$ of $\phi$ at $m \in \mathbb{R}_+^V$ in direction $v \in \mathbb{R}^{V}$ as
\begin{equation}  \label{dirder}
    \phi^{'}_{v}(m)_x  =\lim_{h \xrightarrow{}0} \frac{\phi(m+hv)_x-\phi(m)_x}{h} 
     =  \left( 1 +(\sigma_x - \frac{\lambda_x}{1+\lambda_x})\cdot e^{-\beta(m)_x} \right) \cdot \sum_{y} v_y P_{y,x}
\end{equation}
Since $\nu$, $m$ and $P$ are non-negative, and $0 \leq \sigma_x \leq  \frac{\lambda_x}{1+\lambda_x}$ for all $x \in V$, we see that 
\begin{equation}\label{compl_abs_leq_1}
\left|1 +\left(\sigma_x - \frac{\lambda_x}{1+\lambda_x}\right)\cdot e^{-\beta(m)_x}\right| \leq 1, \qquad x \in V.\end{equation}
Combining the above facts, we upper bound the $\eta$-norm of $\phi^{'}_{v}(m)$ by 
\begin{equation}\label{dir_der_short}
    \Vert \phi^{'}_{v}(m) \Vert_{\eta}  \stackrel{\eqref{eta_norm_def_eq} }{=}
   \sum_{x}  |\phi^{'}_{v}(m)_x| \cdot \eta_x \stackrel{\eqref{dirder}, \eqref{compl_abs_leq_1} }{\leq} 
      \sum_{x,y} |v_y| P_{y,x} \eta_x = \sum_y |v_y| \mu \eta_y \stackrel{\eqref{eta_norm_def_eq}}{=}
      \mu \Vert v \Vert_{\eta}.
\end{equation}
For any $m', m'' \in \mathbb{R}_+^V$, integrating \eqref{dir_der_short} along the line segment that joins $m'$ and $m''$ we obtain 
$\Vert \phi(m')-\phi(m'')\Vert_\eta \leq \mu \Vert m'-m'' \Vert_\eta $. Noting that $\mu \in (0,1)$, the existence and uniqueness of the fixed point $m_*$ of $\phi$ then follow from the Banach fixed point theorem. 
\end{proof} 

\begin{lemma} \label{lem:fpbound} Under the assumption of Proposition \ref{thm:fpsol}, for any $m \in \mathbb{R}_+^V$ we have
\begin{equation}\label{fixed_point_control}
\Vert m - m_*\Vert_\eta \leq \frac{1}{1-\mu} \Vert m - \phi(m)\Vert_\eta, \end{equation}
where $\mu < 1$ is the principal eigenvalue of $P$ (cf.\ Definition \ref{def_eta_norm}). \end{lemma}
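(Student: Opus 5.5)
This is the standard a posteriori estimate for a contraction, and I would derive it directly from the contraction property established in the proof of Proposition \ref{thm:fpsol}. That proof shows, for all $m', m'' \in \mathbb{R}_+^V$,
\begin{equation*}
\Vert \phi(m')-\phi(m'')\Vert_\eta \leq \mu \Vert m'-m''\Vert_\eta,
\end{equation*}
with $\mu\in(0,1)$ the principal eigenvalue of $P$. It also shows that $m_*$ is a fixed point lying in $\mathbb{R}_+^V$.

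Fix $m \in \mathbb{R}_+^V$ and insert $\phi(m)$ using the triangle inequality for $\Vert\cdot\Vert_\eta$, which is a genuine norm because $\eta_x>0$ for every $x$. This gives
\begin{equation*}
\Vert m - m_*\Vert_\eta \leq \Vert m - \phi(m)\Vert_\eta + \Vert \phi(m) - m_*\Vert_\eta .
\end{equation*}
Since $\phi(m_*)=m_*$, the second term equals $\Vert \phi(m)-\phi(m_*)\Vert_\eta$. Both $m$ and $m_*$ lie in $\mathbb{R}_+^V$, so the contraction estimate applies and bounds this term by $\mu\Vert m-m_*\Vert_\eta$.

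Moving $\mu\Vert m-m_*\Vert_\eta$ to the left-hand side gives $(1-\mu)\Vert m-m_*\Vert_\eta\leq \Vert m-\phi(m)\Vert_\eta$. This rearrangement is legitimate because all quantities are finite. Dividing by $1-\mu>0$ yields \eqref{fixed_point_control}.

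The only point needing care is that the contraction inequality is used on the domain where it was proved. The segment between $m$ and $m_*$ lies in the convex set $\mathbb{R}_+^V$, so the integration of \eqref{dir_der_short} along this segment is valid. Beyond this there is no real obstacle.
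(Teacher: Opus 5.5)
Your proof is correct and matches the paper's approach: the paper simply invokes the standard residual bound for the $\mu$-contraction $\phi$ in $\Vert\cdot\Vert_\eta$ (citing a textbook), and your triangle-inequality-plus-rearrangement argument is exactly that bound written out. You also correctly check that $m_*\in\mathbb{R}_+^V$ and that the segment between $m$ and $m_*$ stays in $\mathbb{R}_+^V$, which is where the contraction estimate was proved.
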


\begin{proof} This is a standard 'residual' bound using the fact that $\phi$ is a contraction with contraction factor $\mu < 1$ for the metric induced by the norm $\Vert \cdot \Vert_\eta$, cf.\ \cite[Section 3.2]{gd2003}.
\end{proof}

\subsection{Discrete version of fixed point function}

We now relate the VARW to the fixed point equation $\phi(m_*) = m_*$  by constructing a discrete variant $\phi^{(n)}$ of the map $\phi$ associated to the VARW process (for fixed, finite $n$) so that $\phi^{(n)}( m_*^{(n)})= m_*^{(n)}$ holds (where $m_*^{(n)}$ was defined in \eqref{eq_mn_sn}). 

We start by building a probability space where VARW -- in particular, its terminal stable configuration -- can be defined. 

\begin{definition}[Stacks of instructions] We define three independent families of random variables, called {\emph{instructions}}, as follows: 

\begin{itemize} \item[(Airplane tickets)] For $x \in V$, let $\zeta_{j,x}, \, j \in \mathbb{N}_+ $ be i.i.d. random variables sampled according to the transition kernel $P(x, \cdot)$, i.e. $\p(\zeta_{j,x} = y) = P_{x,y}$ for $y \in V$, and $\p(\zeta_{j,x} = \dagger) = 1-\sum_{y \in V} P_{x,y}$, where $\dagger$ is a designated graveyard state. 

\item[(Taxi tickets)] For $x \in V$, let $\gamma_{j,x},\, j \in \mathbb{N}_+$ be i.i.d. uniform random variables on $[n]$. 

\item[(Landlord notices)] For $x \in V$ and $i \in [n]$, let $\kappa_{j,(x,i)}, j \in \mathbb{N}_+$ be i.i.d.\ Bernoulli random variables, taking value $0$ (sleep) with probability $\frac{\lambda_x}{1+\lambda_x}$ and value $1$ (jump) with probability $\frac{1}{1+\lambda_x}$. 
\end{itemize}
\end{definition}

Now we describe how these ingredients (together with some auxiliary exponential clocks) can be used to construct the jump odometers $ M_*^{(n)}$, cf.\ \eqref{def_eq_M_star_jump}.

\begin{definition}[Construction of VARW using stacks] \label{def_arw_using_stacks}
For each $x \in V$ we start with
  $\lfloor \sigma_x n \rfloor$ houses in village $x$ with one sleepy particle in them. 
  In each operation below we always execute the instructions of each stack in ascending order: $j=1,2,\dots$

\begin{enumerate}

\item \label{stacks_def_i} For each $x \in V$, $\lfloor \nu_x n \rfloor$ many immigrants arrive at the airport of village $x$ and use the first $\lfloor \nu_x n \rfloor$ taxi tickets  $\gamma_{j,x}, \, j=1,\dots, \lfloor \nu_x n \rfloor$ to distribute themselves in the houses of village $x$.

\item \label{stacks_def_ii}
Each active particle in village $x$ has a clock with rate $1+\lambda_x$. When the first clock rings (in any of the houses of any of the villages), say in house $i$ of village $x$, we reveal the next unused landlord notice $\kappa_{j, (x,i)}$ of house $(x,i)$ and evolve the system by making that active particle sleep or jump accordingly. Note that if the instruction is a sleep and the house $(x,i)$ has at least one other (active) particle in it, then nothing happens (the particle tries to sleep and is then instantly re-awoken). 

\item \label{stacks_def_iii} If the revealed landlord notice $\kappa$  was a jump instruction, then this particle goes to the airport of its current village $x$, grabs the next airplane ticket $\zeta_{j',x}$ and
flies to a village (or the graveyard state $\dagger$) accordingly.
If the airplane arrived at village $y$
then it moves to a house in village $y$ according to the next taxi ticket $\gamma_{j'', y}$.

\item We repeat steps \ref{stacks_def_ii} and \ref{stacks_def_iii} until all particles have either jumped to the graveyard state or went to sleep.
 \end{enumerate} 

\end{definition} 

The following well-known fact shows that the final configuration of sleeping particles does not depend on the values of the exponential clocks: 

\begin{lemma}[Abelian property] \label{lem:abelian} The stabilizing odometer configuration  $M_*^{(n)}$ almost surely does not depend on the order of active particles toppled. \end{lemma}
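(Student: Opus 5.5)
We would prove Lemma \ref{lem:abelian} by reducing it to the standard abelian property of ARW in the site-wise stack representation, e.g.\ as in \cite[Section 2]{rolla2020}. The first step is to recast the stacks of Definition \ref{def_arw_using_stacks} as a single instruction stack per house. Concretely, the $j$-th jump from a given house, say $(x,i)$, is associated to a determined airplane ticket and then a determined taxi ticket. However, the airplane and taxi tickets are shared at the village level and consumed in global order, so they are \emph{not} attached to a house. We therefore cannot directly quote the site-wise abelian property. Instead, we treat the "airport" of each village $x$ as an additional site.

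The intermediate model is as follows. A particle executing a jump instruction at $(x,i)$ moves to the airport site $a_x$. The airport site $a_x$ is always active (no sleep instructions). It topples by sending a particle according to the next airplane ticket $\zeta_{j,x}$ to the airport-arrival site $b_y$ (or to $\dagger$). The site $b_y$ in turn sends a particle to house $(y,\gamma_{j,y})$. The immigrants initially sit at $b_x$; equivalently, they use the first taxi tickets, consistent with step \ref{stacks_def_i}.

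Each site now has its own stack of instructions: the landlord notices at houses, the airplane tickets at $a_x$, and the taxi tickets at $b_y$. The system is thus an ARW-type model on the finite site set $V_n \cup \{a_x,b_x : x\in V\} \cup \{\dagger\}$ with site-wise stacks, where $a_x,b_x$ carry only jump instructions. Running the process of Definition \ref{def_arw_using_stacks} corresponds to one legal toppling sequence in which airport sites are toppled immediately.

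The core step is to verify the local abelian property for this system: two legal topplings at distinct sites commute, since each uses its own stack. Together with the least-action/monotonicity argument, this gives the conclusion. Any legal stabilizing sequence has odometer bounded by that of any other acceptable sequence, so all legal complete sequences yield the same odometer at every site. The number of jumps from village $x$, namely $M_*^{(n)}(x)$, equals the odometer of $a_x$. Equivalently, it is the sum over $i$ of the number of jump instructions used at $(x,i)$. It is therefore order independent.

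Finiteness of stabilization, needed for the least-action principle to apply, is the one probabilistic input. Almost surely each particle reaches $\dagger$ after finitely many jumps, by strict sub-stochasticity and irreducibility of $P$. Infinite sleep-jump cycles are excluded almost surely because sleep instructions occur with positive probability when $\lambda_x>0$. When $\lambda_x=0$, particles simply walk until death. We expect the main obstacle to be the bookkeeping showing that the clock-driven dynamics is exactly a legal toppling sequence in the extended model. In particular, a sleep instruction read while another particle is present must be treated as a no-op that still consumes the notice, matching the standard convention.
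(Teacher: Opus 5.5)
Your proposal is correct and follows the paper's argument: apply the abelian property and least-action principle of \cite[Section 2]{rolla2020}, with almost sure termination coming from strict sub-stochasticity and irreducibility of $P$. Your auxiliary airport and arrival sites $a_x,b_x$, which carry the airplane and taxi stacks, are a useful addition: the paper's one-line citation silently assumes its village-level shared stacks fit the site-wise stack framework, and your construction makes that reduction explicit.
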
 

\begin{proof} See for example Section 2 of \cite{rolla2020}. The strict sub-stochasticity and irreducibility of $P$ guarantees that the process eventually terminates almost surely. \end{proof}

We now build a discrete variant $\phi^{(n)}: \mathbb{R}_+^{V} \to \mathbb{R}_+^V$ of the map $\phi$ using
the  instructions $\zeta, \gamma, \kappa$. We first define the single-loop odometer function $\Phi^{(n)}: \mathbb{N}_0^{V} \to \mathbb{N}_0^{V}$ and later we will define its scaled variant 
$\phi^{(n)}(m):=\frac{1}{n}\Phi^{(n)}(\lfloor n m \rfloor)$. The idea of the construction is: given any (hypothetical) input jump odometer configuration $M \in \mathbb{N}_0^V$ we use the $\zeta, \gamma, \kappa$ stacks 
to trace the movement of  particles until their next jump, and produce an output jump odometer configuration $\Phi^{(n)}(M) \in \mathbb{N}_0^V$ as a result.

\begin{definition}[Single-loop jump odometer function]\label{def_single_loop}
Given $n \in \mathbb{N}_+$ and an input jump odometer configuration $M = \left(M_x\right)_{x \in V} \in \mathbb{N}_0^{V}$ for $x \in V$, we define  $\Phi^{(n)}(M)$ as follows:

\begin{itemize} 

\item[(Inbound)] Calculate the number of particles that arrived to the airport of village $x \in V$:

\begin{equation}\label{def_eq_I_x}
I_x = \lfloor  \nu_x n  \rfloor + \sum_{y \in V} \sum_{j=1}^{M_y} \mathbb{1}\{\zeta_{j,y} = x\} 
\end{equation}

\item[(Active houses)] Distribute the particles arriving to the airport of village $x$ among the houses in village $x$ using the taxi tickets. Denote by $\mathcal{A}_x$ the set of houses in village $x$ in which at least one active particle arrived, and let $A_x$ denote the number of such houses:
\begin{equation}\label{def_eq_active_houses_in_x}
   \mathcal{A}_x:= \left\{ i \in [n] \,: \; \exists \, j \in [I_x] \,: \, \gamma_{j, x} = i \, \right\}, \qquad A_x:= |\mathcal{A}_x|
\end{equation}
\item[(Quiet houses)] Let $\sigma_{x,i}=1$ if the house $(x,i)$ initially had a sleepy particle and $\sigma_{x,i}=0$ otherwise.
Denote by $Q_x$ the number of houses in village $x$ 
in which there was an initial sleepy particle, but no active particle arrived there:
\begin{equation}\label{def_eq_quiet_houses_in_x}
   Q_x:= \sum_{i=1}^n \mathbb{1}\{ \,  \sigma_{x,i}=1, \; \forall \, j \in [I_x] \,: \, \gamma_{j, x} \neq i \, \} 
\end{equation}

\item[(Jump/Sleep)] If $i \in \mathcal{A}_x$, i.e., if $(x,i)$ is an active house of village $x$, let us define the $\mathbb{N}_+$-valued random variable
\begin{equation} T_{x,i}=\sigma_{x,i}+\sum_{j=1}^{I_x} \mathbb{1}\{\gamma_{j,x} =i \}. 
\end{equation}
Thus $ T_{x,i}$ is total number of active particles that ever entered this house (plus one in case they woke up a sleepy particle there).  Let
\begin{equation}\label{def_eq_N_xi}
N_{x,i}=\min\left\{ \, K \in \mathbb{N}_0 \, : \, \sum_{j=1}^K \kappa_{j,(x,i)} = T_{x,i}-1 
\right\}
\end{equation} 
denote the number of landlord notices revealed until $T_{x,i}-1$ particles jumped away from house $(x,i)$.  Thus, if $ \kappa_{N_{x,i}+1,(x,i)}=1$ then the last active particle in house $(x,i)$ jumps, but if  $ \kappa_{N_{x,i}+1,(x,i)}=0$ then $(x,i)$ will have a sleepy particle
in the final stable configuration. Let 
\begin{equation}\label{def_eq_J_x}
J_x=\sum_{i \in \mathcal{A}_x} \kappa_{N_{x,i}+1,(x,i)}
\end{equation} denote the number of active houses in village $x$ from which all particles jumped away (these houses will be empty in the final stable configuration).

\item[(Outbound)] 
Now we are ready to define $\Phi^{(n)}(M) \in \mathbb{N}_0^V$ by writing a formula for the number $\Phi^{(n)}(M)_x$ of particles that jumped from village $x$: we just have to add the number $I_x$ of incomers to the number $\lfloor \sigma_x n  \rfloor$ of particles who were initially there and subtract from it the number $Q_x+A_x-J_x$ of particles who stayed:
\begin{equation} \label{dfp} \Phi^{(n)}(M)_x = \lfloor \sigma_x n \rfloor -Q_x +I_x - A_x + J_x. 
\end{equation}
\item[(Net income)] Let us define the $\mathbb{Z}^V$-valued random variable $S^{(n)}(M)$ by 
  \begin{equation}\label{def_eq_S_n_function}
      S^{(n)}(M)_x := - M_x + \lfloor \sigma_x n \rfloor + I_x, \qquad x \in V. 
  \end{equation}
\end{itemize} 
For any $m=(m_x)_{x \in V} \in \mathbb{R}_+^V$, let 
$\lfloor m \rfloor := (\lfloor m_x \rfloor)_{x \in V} \in \mathbb{N}_0^V$.
Let us define the scaled random functions
  $\phi^{(n)}: \mathbb{R}_+^{V} \to \mathbb{R}_+^V$ 
  and  $s^{(n)}: \mathbb{R}_+^{V} \to \mathbb{R}^V$
  by
\begin{equation}\label{def_eq_scaled_phi} 
\phi^{(n)}(m):=\frac{1}{n}\Phi^{(n)}(\lfloor n m \rfloor), 
\qquad
s^{(n)}(m):=\frac{1}{n}S^{(n)}(\lfloor n m \rfloor). 
\end{equation}

\end{definition}

\begin{lemma}[Single-loop jump odometer fixed point] \label{lem:loop_abelian}  For almost all realizations of the instruction stacks $\zeta, \gamma, \kappa$,  the scaled jump odometer configuration $m_*^{(n)}$  and the scaled final sleepy particle counter $s_*^{(n)}$ (cf.\ \eqref{def_eq_M_star_jump}) satisfy
\begin{equation}\label{discrete_fixed_point_eq}
\phi^{(n)}( m_*^{(n)}) = m_*^{(n)}, \qquad s^{(n)}(m_*^{(n)})=s_*^{(n)}.
\end{equation}
\end{lemma}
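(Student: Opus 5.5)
Since $n m_*^{(n)} = M_*^{(n)} \in \mathbb{N}_0^V$, we have $\lfloor n m_*^{(n)}\rfloor = M_*^{(n)}$. By \eqref{def_eq_scaled_phi} it therefore suffices to prove the unscaled identities
\[
\Phi^{(n)}(M_*^{(n)}) = M_*^{(n)}, \qquad S^{(n)}(M_*^{(n)}) = S_*^{(n)},
\]
almost surely. The plan is to run the stack construction of Definition \ref{def_arw_using_stacks} to completion, which happens almost surely by Lemma \ref{lem:abelian}. We then check that every intermediate quantity in Definition \ref{def_single_loop}, evaluated at the input $M=M_*^{(n)}$, has exactly its intended meaning for the actual stabilization.

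\textbf{Inbound and active houses.} In the actual process, village $y$ has used exactly the airplane tickets $\zeta_{1,y},\dots,\zeta_{M_*^{(n)}(y),y}$, because each jump from $y$ consumes the next one. Hence the number of particles ever arriving at the airport of $x$, immigrants included, is exactly $I_x$ from \eqref{def_eq_I_x}. Each arrival consumes the next taxi ticket of $x$, so the immigrants use $\gamma_{j,x}$ for $j\le\lfloor\nu_x n\rfloor$, and all arrivals together use exactly $\gamma_{1,x},\dots,\gamma_{I_x,x}$. There is a subtlety here: the order in which airplane arrivals from different $y$ interleave affects which taxi ticket each particle gets. It does not affect the multiset $\{\gamma_{j,x}\}_{j\le I_x}$, and $\mathcal{A}_x$, $Q_x$ and $T_{x,i}$ depend only on that multiset. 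So $\mathcal{A}_x$ is the set of houses ever visited by an active particle, $Q_x$ counts initially sleepy houses never visited, and $T_{x,i}$ is the number of particles that were ever present in house $(x,i)$.

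\textbf{Local analysis of a visited house.} This is the main step, and it uses only the landlord stack of $(x,i)$. For $i\in\mathcal{A}_x$, in the actual process the house receives $T_{x,i}$ particles in total and ends with either $0$ or $1$ sleeping particle. Every landlord notice revealed at $(x,i)$ is either a jump, which removes one particle, or a sleep. A sleep is ineffective if two or more particles are present. It is effective if the particle is alone, and then it ends activity at the house unless a later arrival wakes it.

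The claim is that the final state is sleeping if and only if $\kappa_{N_{x,i}+1,(x,i)}=0$. To see this, note that exactly $T_{x,i}-1$ jumps occur before the last particle is alone with no further arrivals, and these are the first $T_{x,i}-1$ ones in the stack, consumed at notices $1,\dots,N_{x,i}$. The next notice then decides: a jump empties the house, while a sleep leaves a sleeper permanently. If that sleep happened while more particles were still to arrive, they would wake it, and counting jumps still gives the same index.

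The careful point is that the total number of jumps from $(x,i)$ is determined as $T_{x,i}-\mathbb{1}\{\text{final sleeper}\}$. Since jumps are the $1$'s in the $\kappa$-sequence consumed in order, the final sleeper exists iff the notice following the $(T_{x,i}-1)$-th jump is a $0$. This is exactly the role of $N_{x,i}$ in \eqref{def_eq_N_xi}. It follows that the number of empty visited houses is $J_x$ from \eqref{def_eq_J_x}, and the number of final sleepers in $x$ is $S_*^{(n)}(x)=Q_x+A_x-J_x$.

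\textbf{Conclusion by mass balance.} Particle conservation at village $x$ gives
\[
M_*^{(n)}(x) = \lfloor\sigma_x n\rfloor + I_x - S_*^{(n)}(x) = \lfloor\sigma_x n\rfloor - Q_x + I_x - A_x + J_x = \Phi^{(n)}(M_*^{(n)})_x,
\]
by \eqref{dfp}. Rearranging the first equality, $S_*^{(n)}(x) = -M_*^{(n)}(x)+\lfloor\sigma_x n\rfloor+I_x = S^{(n)}(M_*^{(n)})_x$, which is \eqref{def_eq_S_n_function}. Dividing by $n$ yields \eqref{discrete_fixed_point_eq}.

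I expect the only genuine obstacle to be the local step: it must be argued rigorously that the final occupancy of a house is determined by $T_{x,i}$ and its own landlord stack, independently of the timing of arrivals.
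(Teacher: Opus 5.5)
Your proposal is correct and follows the paper's argument. The paper likewise reads $\Phi^{(n)}(M_*^{(n)})=M_*^{(n)}$ off Definitions \ref{def_arw_using_stacks} and \ref{def_single_loop}, using that the final number of sleepers in village $x$ is $Q_x+A_x-J_x$, and then obtains $S^{(n)}(M_*^{(n)})=S_*^{(n)}$ by rearranging \eqref{dfp} and \eqref{def_eq_S_n_function}; your ticket-consumption and house-by-house landlord-notice analysis only spells out what the paper leaves implicit in those definitions.
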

\begin{proof} The identity $\Phi^{(n)}( M_*^{(n)}) = M_*^{(n)}$ follows from \eqref{def_eq_M_star_jump_x}, \eqref{def_eq_M_star_jump} and Definitions \ref{def_arw_using_stacks}, \ref{def_single_loop}. For any $M$, the number of sleepy particles in village $x$ after a single loop is equal to $Q_x+A_x-J_x$, thus
  \eqref{sleepy_S_n_def_eq}, 
\eqref{def_eq_M_star_jump},  $\Phi^{(n)}( M_*^{(n)}) = M_*^{(n)}$,
\eqref{dfp} and \eqref{def_eq_S_n_function} imply that
$S^{(n)}(M_*^{(n)})=S_*^{(n)}$ holds. Now \eqref{discrete_fixed_point_eq} follows from \eqref{eq_mn_sn} and \eqref{def_eq_scaled_phi}.
\end{proof}

\subsection{Proof of LLN using that $\phi^{(n)}$ concentrates around $\phi$}

Recall the definitions of $\phi(\cdot)$ and $\phi^{(n)}(\cdot)$  from \eqref{fixptfun} and \eqref{def_eq_scaled_phi}, respectively. 
Our main tool is that $\phi^{(n)}(m)$ and $\phi(m)$ are close if $n \gg 1$. We start by stating these concentration estimates and then show how they imply the main theorem. 
Recall the definitions of $s(\cdot)$ and $s^{(n)}(\cdot)$ from \eqref{def_eq_beta} and \eqref{def_eq_scaled_phi}, respectively.  For any $r \in \mathbb{R}$ let $r_+ := \max\{r,0\}$.

\begin{lemma}[Single-loop concentration bounds] \label{lem:mainbound} For all $a \in \mathbb{R}_+$ and $M \in \mathbb{N}_0^{V}$ we have
\begin{eqnarray}\label{eq_statement_single_loop_sleepy_concentr}
\p\left( \left\Vert s^{(n)}\left( \frac{M}{n} \right) - s\left( \frac{M}{n} \right) \right\Vert_\infty \geq a \right) \leq 2|V| \exp\left( -\frac{2 (an-2)_+^2 }{  \Vert M \Vert_1} \right)
\\
\label{eq_concentration_bound}
\p\left( \left\Vert\phi^{(n)}\left( \frac{M}{n} \right) - \phi\left( \frac{M}{n} \right) \right\Vert_\infty \geq a \right) \leq 
4 |V| \exp\left(-\frac{ 2  (an- \Vert \nu \Vert_1 - \Vert M \Vert_1/n -2 )_+^2 }
{ 81   \left( n+ \Vert \nu \Vert_1 n + an+  \Vert M \Vert_1 \right) } \right) 
\end{eqnarray}
\end{lemma}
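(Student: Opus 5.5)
Fix $M$ and $x\in V$. Both bounds come from concentration of sums of independent instructions together with McDiarmid's bounded differences inequality; at the end we take a union bound over $x \in V$, which produces the factor $|V|$.

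\textbf{First bound.} By \eqref{def_eq_S_n_function}, $S^{(n)}(M)_x - n\, s(M/n)_x$ equals $\lfloor \sigma_x n\rfloor - \sigma_x n + \lfloor \nu_x n\rfloor - \nu_x n$ plus the centred sum
\begin{equation*}
\sum_{y}\sum_{j=1}^{M_y}\left(\mathbb{1}\{\zeta_{j,y}=x\}-P_{y,x}\right).
\end{equation*}
The rounding errors are bounded by $2$ in absolute value. The centred sum consists of $\Vert M\Vert_1$ independent terms, each taking values in an interval of length one. Hoeffding's inequality applied to the event that the sum exceeds $an-2$ gives $2\exp(-2(an-2)_+^2/\Vert M\Vert_1)$ for each $x$, and the union bound over $x$ gives \eqref{eq_statement_single_loop_sleepy_concentr}.

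\textbf{Second bound: reduction.} Write $\Phi^{(n)}(M)_x=\lfloor\sigma_x n\rfloor + I_x -(Q_x+A_x-J_x)$. The target is $n\phi(M/n)_x = n\beta_x+n(\sigma_x-\tfrac{\lambda_x}{1+\lambda_x})(1-e^{-\beta_x})$, where $\beta=\beta(M/n)$. Compared with the first part, the main extra task is to show that the number of houses that end up asleep, $Q_x+A_x-J_x$, concentrates around $n\bigl[\sigma_x e^{-\beta_x}+\tfrac{\lambda_x}{1+\lambda_x}(1-e^{-\beta_x})\bigr]$.

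\textbf{Second bound: the sleeping-house count.} I would view $Q_x+A_x-J_x$ as a function $F$ of independent inputs, in two stages.
\begin{itemize}
\item[(1)] The airplane tickets determine $I_x$. By the Hoeffding step above, $I_x$ lies within $an/3$ of $n\beta_x$, up to the constant and $\nu$ corrections that appear in the exponent.
\item[(2)] Conditionally on $I_x$, $F$ is a function of the $I_x$ taxi tickets and of the indicators $\kappa_{N_{x,i}+1,(x,i)}$.
\end{itemize}
For stage (2), note that for each active house the final notice $\kappa_{N_{x,i}+1,(x,i)}$ is, by the strong Markov property of the i.i.d.\ notice stack at the stopping time $N_{x,i}$, a fresh Bernoulli$(\tfrac{1}{1+\lambda_x})$ variable. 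It is independent of the taxi tickets and of the other houses. So I would build $J_x$ from an independent Bernoulli sequence attached to the houses.

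Changing one taxi ticket changes $Q_x$ and $A_x$ by at most $1$ each and $J_x$ by at most $2$. Changing one Bernoulli changes $J_x$ by at most $1$. McDiarmid's inequality then gives a Gaussian tail with variance proxy of order $I_x+n$; the bounded-difference constants of size about $3$ are where the factor $81$ comes from (splitting $a$ into thirds). The conditional expectation of $F$ given $I_x=k$ is
\begin{equation*}
\lfloor\sigma_x n\rfloor(1-1/n)^{k} + \tfrac{\lambda_x}{1+\lambda_x}\, n\bigl(1-(1-1/n)^{k}\bigr).
\end{equation*}
This must be compared with the same expression with $e^{-\beta_x}$ in place of $(1-1/n)^k$. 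Since $|(1-1/n)^k-e^{-k/n}|\le k/n^2$, and since $e^{-\cdot}$ is $1$-Lipschitz so that replacing $k/n$ by $\beta_x$ costs at most $|k-n\beta_x|/n$, this comparison produces the $\Vert M\Vert_1/n$, $\Vert\nu\Vert_1$ and constant shifts in $an-\Vert\nu\Vert_1-\Vert M\Vert_1/n-2$.

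\textbf{Second bound: assembly.} Split the event into three pieces: deviation of $I_x$, the McDiarmid deviation, and the deterministic bias. Bound the variance proxy using $I_x\le n\Vert\nu\Vert_1+\Vert M\Vert_1$, plus $an$ to absorb the allowed deviation of $I_x$. This gives the denominator $n+\Vert\nu\Vert_1 n+an+\Vert M\Vert_1$, and the union bound over $x$ together with the two probabilistic terms gives $4|V|$.

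\textbf{Main obstacle.} The delicate point is the legitimacy of the independence in stage (2). The index $N_{x,i}$ depends on $T_{x,i}$, hence on the taxi tickets. This has to be handled by the stopping-time/resampling argument, or alternatively by applying McDiarmid directly to the full stacks of $\kappa$'s, where the dependence on the notice stack is harder to control. Tracking constants so that exactly $81$ and the stated shifts come out is routine but fiddly.
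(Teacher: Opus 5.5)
Your proposal is correct and follows the paper's proof essentially step for step. The first bound is Hoeffding for $I_x$ with rounding offset $2$. For the second you use the stopping-time replacement of the final landlord notices by independent Bernoulli variables (the paper's Claim~\ref{claim_alter_kappa}), McDiarmid conditionally on $I_x=u$, the exact conditional mean compared via $|(1-\frac{1}{n})^u-e^{-u/n}|\le u/n^2$, a split of $a$ into thirds, and a union bound.

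One small fix for the constants. Adding your componentwise bounds $1+1+2$ gives a bounded difference of $4$ rather than $3$, which would produce $144$ instead of $81$. Argue instead that each house contributes $0$ or $1$ to $Q_x+A_x-J_x$, and that changing one taxi ticket affects only two houses. This gives a bounded difference of at most $2\le 3$.
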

Before we prove Lemma \ref{lem:mainbound} in Section \ref{subsection_proof_lemma_concentration}, we derive the proof of Theorem \ref{thm:LLN} from it. We begin with the following crude estimate. Recall the definition of $M_*^{(n)}$ from
\eqref{def_eq_M_star_jump}.

\begin{lemma} \label{lem:odotail} There exists $C_*=C_*(P,\nu) \in \mathbb{R}_+$  such that
\begin{equation}\label{p_m_star_big_goestozero_statement_eq}
\p(\Vert M_*^{(n)}\Vert_1 > C_* n) \to 0 \quad \text{ as } \quad n \to \infty. \end{equation} 
\end{lemma}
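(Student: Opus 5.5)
The plan is to dominate the jump odometer by a process with no sleeping at all, and then compute that process's mean exactly. By monotonicity (the abelian property, Lemma \ref{lem:abelian}, together with the standard monotonicity of ARW odometers in the sleep rate), removing sleep instructions can only increase the odometer. However, a cleaner route avoids monotonicity lemmas altogether. Every jump in VARW is made by some particle. Each particle, initially sleepy or an immigrant, can be traced through its own sequence of airplane tickets, since a particle's successive village positions form a sub-Markov chain with kernel $P$ killed at $\dagger$. The number of jumps a particle makes out of village $x$ is therefore at most the number of visits to $x$ of its trajectory before killing. That trajectory is the full $P$-chain run until absorption, because a sleeping pause only truncates it.

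To make this rigorous with the stack construction, I would use a different but equivalent construction: attach to each particle its own i.i.d. sequence of destination villages. This gives the same law of the village-level process, since the law of $M_*^{(n)}$ does not depend on which ticket a particle uses. Then
\[
M_*^{(n)}(x) \le \sum_{p} \#\{\text{visits of particle } p\text{'s chain to } x\},
\]
summed over the at most $\sum_y(\lfloor \sigma_y n\rfloor+\lfloor \nu_y n\rfloor)\le (|V|+\Vert\nu\Vert_1)n$ particles. The caveat is that a particle may fall asleep and later be woken, and then it resumes its own chain. So the count of its jumps is still bounded by its chain's visits, which is what the bound needs.

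Taking expectations, each chain started at $y$ has expected total visits $\sum_{\ell\ge0}P^\ell_{y,x}=(I-P)^{-1}_{y,x}$. This is finite since the spectral radius $\mu<1$ (Definition \ref{def_eta_norm}). Hence
\[
\mathbb{E}\Vert M_*^{(n)}\Vert_1 \le (|V|+\Vert\nu\Vert_1)\, n\, \max_y \sum_x (I-P)^{-1}_{y,x} =: C n.
\]
This alone gives only a bounded-probability statement via Markov, so I would add a variance bound. The particles' chains are independent with geometric-type tails in the number of visits (summable second moments, again by $\mu<1$). So the sum of $O(n)$ independent variables has variance $O(n)$, and Chebyshev with $C_*=2C$ gives $\p(\Vert M_*^{(n)}\Vert_1>C_*n)=O(1/n)\to0$.

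The main obstacle is justifying the per-particle bound. Particles are indistinguishable in ARW, and the paper's stacks attach tickets to villages rather than to particles. I would handle this by arguing that the law of the process (hence of $M_*^{(n)}$) is unchanged if tickets are assigned per particle, since at each jump the destination is a fresh independent $P(x,\cdot)$ sample regardless of bookkeeping. Sleeping, waking and collisions never alter a particle's own future chain; they only delay it or stop it early, which can only reduce its jump count.
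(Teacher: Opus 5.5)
Your proof is correct, and it takes a genuinely different route from the paper. The paper never compares VARW with sleep-free walks. It uses three ingredients:
\begin{itemize}
\item $m_*^{(n)}$ is a fixed point of the single-loop map $\phi^{(n)}$ (Lemma \ref{lem:loop_abelian});
\item the deterministic bound $\Vert\phi(m)\Vert_\eta\le\Vert\nu\Vert_\eta+\mu\Vert m\Vert_\eta$, which relies on subcriticality $\sigma_x\le\frac{\lambda_x}{1+\lambda_x}$ to get $\phi(m)\le\beta(m)$;
\item a union bound of the concentration estimate \eqref{eq_concentration_bound} over all $M$ with $\Vert M\Vert_1> C_*n$.
\end{itemize}
For each such $M$, closeness of $\phi^{(n)}(M/n)$ to $\phi(M/n)$ forces $\Vert\phi^{(n)}(M/n)\Vert_\eta<\Vert M/n\Vert_\eta$. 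Hence, with high probability, no such $M$ can be the stabilizing odometer.

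Your argument has three steps. You re-couple the process with per-particle killed $P$-chains, which is legitimate because the lemma concerns only the law of $M_*^{(n)}$. You then use the pathwise bound $\Vert M_*^{(n)}\Vert_1\le\sum_p\tau^p$ by independent absorption times. Finally you apply a first- and second-moment bound. This is more elementary and more general: it needs neither the subcriticality hypothesis nor Lemma \ref{lem:mainbound}, and it holds for every $\sigma\in[0,1]^V$ and every $\lambda$.

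The paper's approach buys three things. First, a sharper constant $C_*=\Vert\nu\Vert_\eta/(\eta_{min}\alpha)$ that does not charge anything for the initially sleepy particles. Your constant pays roughly $|V|\,n$ times the Green's function row sums of $P$ for them, reflecting that in the subcritical regime sleepers do not inflate the odometer beyond what $\nu$ forces. Second, an exponential rather than $O(1/n)$ tail, though Bernstein's inequality would give you that too, since each $\tau^p$ has geometric tails. Third, an argument that stays inside the stack construction and reuses the same fixed-point and concentration machinery as the main proof. For the LLN itself only the finiteness of $C_*$ matters, so your constant serves equally well there.
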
 
\begin{proof}
First observe that \eqref{eta_norm_def_eq} and $P \eta =\mu \eta$ imply that $\Vert m P\Vert_\eta = \mu \cdot \Vert m \Vert_\eta$ where $\mu \in (0,1)$ is the principal eigenvalue of $P$. Moreover we can use $\phi(m)_x \geq 0$ and $0 \leq \sigma_x \leq \frac{\lambda_x}{1+\lambda_x}$ to bound $\phi(m)_x \leq \beta(m)_x$ (cf.\ \eqref{fixptfun}), obtaining 
\begin{equation}\label{eta_norm_bound_crude}
\Vert  \phi( m)\Vert_{\eta} \leq \Vert \beta(m)  \Vert_\eta \stackrel{\eqref{def_eq_beta}}{=} \Vert \nu + m P \Vert_\eta  \leq
\Vert \nu \Vert_\eta + \mu \cdot \Vert m \Vert_\eta. 
\end{equation}
Let $\alpha:= \frac{1-\mu}{2} >0$. On the event $A_n(m):=\{\,\Vert\phi^{(n)}(m) -  \phi( m)\Vert_{\eta} \leq \alpha \Vert m \Vert_\eta\,\}$ we have
\begin{equation}\label{strict_noneq}
 \Vert\phi^{(n)}(m)\Vert_{\eta} \leq \Vert\phi(m)\Vert_{\eta} + \alpha\Vert m\Vert_{\eta}  \stackrel{\eqref{eta_norm_bound_crude}}{\leq}  \Vert \nu \Vert_{\eta} + \frac{1+\mu}{2} \cdot\Vert m \Vert_{\eta} \stackrel{(*)}{<} \Vert m \Vert_{\eta},
\end{equation}
where $(*)$ holds if we further assume $ \Vert m \Vert_{\eta} > \frac{1}{\alpha} \Vert \nu \Vert_{\eta} $. Recalling the definition of $\eta_{min}$ from \eqref{eta_min_def}, we define $C_*:= \frac{1}{ \eta_{min}} \frac{1}{\alpha} \Vert \nu \Vert_{\eta}$. It follows from \eqref{norm_inequalities} and \eqref{strict_noneq}  that if $\Vert m \Vert_1 > C_*$ and $A_n(m)$ holds then $\Vert\phi^{(n)}(m)\Vert_{\eta}<\Vert m \Vert_\eta$, and thus
$\phi^{(n)}(m) \neq m$. On the other hand, $\phi^{(n)}( m_*^{(n)}) = m_*^{(n)}$ holds by Lemma \ref{lem:loop_abelian},
and thus if $\Vert M_*^{(n)}\Vert_1 > C_* n$ then 
$A^c_n(m^{(n)}_*)$ must hold.
If we let $B_n(m):=\{ \Vert\phi^{(n)}(m) -  \phi( m)\Vert_\infty \geq \frac{1}{|V|} \eta_{min} \alpha \Vert m \Vert_1 \}$ then
by \eqref{norm_inequalities} we have $A_n^c(m) \subseteq B_n(m)$.
Therefore $\Vert M_*^{(n)}\Vert_1 > C_* n$ implies $B_n(M^{(n)}_*/n)$, and we obtain 
\begin{multline}\label{big_M_*_prob_bound_eq}
    \mathbb{P}(\, \Vert M_*^{(n)} \Vert_1 > C_*n \, ) \leq
\mathbb{P}\left( \, \exists \, M \,: \, \Vert M \Vert_1 > C_* n \; \text{ and } \; B_n(M/n) \text{ holds} \,   \right) \leq \\
      \sum_{k \geq C_* n} \; \sum_{M \, : \, \Vert M \Vert_1=k } \mathbb{P}\left(\,  B_n\left(M/n\right)\,  \right)    \stackrel{\eqref{eq_concentration_bound}}{\leq}  \\   \sum_{k \geq C_* n} (k+1)^{|V|} \cdot 4\exp\left(-\frac{ 2  (  \eta_{min} \alpha k / |V| - \Vert \nu \Vert_1 - k/n -2 )_+^2 }
{ 81   \left( n+ n\Vert \nu \Vert_1  + \eta_{min} \alpha k/|V| +  k \right) } \right).
\end{multline}
The sum on the RHS of \eqref{big_M_*_prob_bound_eq} goes to zero as $n \to \infty$, i.e., \eqref{p_m_star_big_goestozero_statement_eq} holds.
\end{proof}

\begin{proof}[Proof of Theorem \ref{thm:LLN}] By Lemma \ref{lem:odotail}, in order to prove that $m_*^{(n)}$ converges in probability to $m_*$, it is enough to show that for any $\varepsilon>0$ we have
\begin{equation}
 \mathbb{P}\left( \Vert m_*^{(n)} -m_* \Vert_\eta \geq \varepsilon, \, \; 
 \Vert  m_*^{(n)} \Vert_1 \leq C_*
 \right) \to 0, \qquad n \to \infty.
\end{equation}
We can bound 
\begin{multline}\label{bbbbound}
\Vert m_*^{(n)} -m_* \Vert_\eta \stackrel{\eqref{fixed_point_control}}{\leq} \frac{1}{1-\mu} \Vert m_*^{(n)} - \phi(m_*^{(n)}) \Vert_\eta
\stackrel{\eqref{discrete_fixed_point_eq}}{=} \\
\frac{1}{1-\mu} \Vert \phi^{(n)}(m_*^{(n)}) - \phi(m_*^{(n)}) \Vert_\eta \stackrel{\eqref{norm_inequalities}}{\leq} \frac{|V|}{1-\mu} \Vert \phi^{(n)}(m_*^{(n)}) - \phi(m_*^{(n)}) \Vert_\infty,
\end{multline}
and thus
\begin{multline}\label{m_star_bbound}
   \mathbb{P}\left( \Vert m_*^{(n)} -m_* \Vert_\eta \geq \varepsilon, \, \; 
 \Vert  m_*^{(n)} \Vert_1 \leq C_*
 \right) \stackrel{ \eqref{bbbbound} }{\leq} \\
 \mathbb{P}\left( \, \exists \, M \, : \, \Vert M \Vert_1 \leq C_* n \; \text{ and } \; \left\Vert\phi^{(n)}\left( \frac{M}{n} \right) - \phi\left( \frac{M}{n} \right) \right\Vert_\infty \geq \varepsilon \frac{1-\mu}{|V|}
  \right) \leq \\
  \sum_{ M \, : \, \Vert M \Vert_1 \leq C_* n }  
 \p\left( \left\Vert\phi^{(n)}\left( \frac{M}{n} \right) - \phi\left( \frac{M}{n} \right) \right\Vert_\infty \geq \varepsilon \frac{1-\mu}{|V|} \right) \stackrel{\eqref{eq_concentration_bound}}{\leq} \\
\left( C_* n \right)^{|V|} 
4 |V| \exp\left(-\frac{ 2  ( \varepsilon \frac{1-\mu}{|V|} n - \Vert \nu \Vert_1 - C_* -2 )_+^2 }
{ 81   \left( n+ \Vert \nu \Vert_1 n  +  \varepsilon \frac{1-\mu}{|V|} n +   C_* n \right) } \right) \to 0, \quad n \to \infty.
 \end{multline}
This completes the proof of $m_*^{(n)} \stackrel{\mathbb{P}}{\longrightarrow} m_*$. It remains to deduce $s_*^{(n)} \stackrel{\mathbb{P}}{\longrightarrow} s_*$ from this. Recalling
$s_*^{(n)}= s^{(n)}(m_*^{(n)})$ (cf.\ \eqref{discrete_fixed_point_eq}),  $s_*=s(m_*)$ (cf.\ \eqref{balance}, \eqref{def_eq_beta}), it is enough to show that $\left\Vert s^{(n)}\left( m^{(n)}_* \right) - s\left(   m^{(n)}_* \right) \right\Vert_\infty \stackrel{\mathbb{P}}{\longrightarrow} 0$. By Lemma \ref{lem:odotail}, it is enough to check that with
 high probability there is no $M$ with $\Vert M \Vert_1 \leq C_* n$ for which 
$\left\Vert s^{(n)}\left( M/n \right) - s\left( M/n \right) \right\Vert_\infty \geq \varepsilon$ holds. This follows from \eqref{eq_statement_single_loop_sleepy_concentr} by a calculation similar to \eqref{m_star_bbound}.
\end{proof}

\subsection{Proof of single-loop concentration bounds}\label{subsection_proof_lemma_concentration}

The goal of Section \ref{subsection_proof_lemma_concentration} is to prove Lemma \ref{lem:mainbound}.

Throughout we fix $x \in V$,  $a \in \mathbb{R}_+$ and $M \in \mathbb{N}_0^{V}$.
We use the shorthand 
\begin{equation} 
\Phi^{(n)}_x:= \Phi^{(n)}(M)_x, \qquad m:=\frac{M}{n}.
\end{equation}

The number $I_x$ of particles that arrived to village $x$ (cf.\ \eqref{def_eq_I_x}) is equal to the constant $\lfloor  \nu_x n  \rfloor$ plus  at most $\Vert M \Vert_1$ many independent Bernoulli variables, thus Hoeffding's inequality yields 
\begin{equation} \label{dev1}
\mathbb{P}(|I_x-\mathbb{E}(I_x)| \geq L) \leq 2\exp\left(-\frac{2L^2}{  \Vert M\Vert_1}\right)
\quad \text{for any} \quad  L \in \mathbb{R}_+.
\end{equation}

Recalling \eqref{def_eq_beta}, it follows from \eqref{def_eq_I_x} and the linearity of expectation that we have
\begin{equation}\label{EI_x_nbetax}
    \left| \mathbb{E}(I_x) -\beta(m)_x \cdot n \right| \leq 1.
\end{equation}
Now the proof of \eqref{eq_statement_single_loop_sleepy_concentr} directly follows
from \eqref{def_eq_S_n_function}, \eqref{def_eq_scaled_phi}, \eqref{def_eq_beta}, \eqref{dev1} and \eqref{EI_x_nbetax}.

It remains to prove \eqref{eq_concentration_bound}. Recall that $\Phi^{(n)}_x$ is defined by \eqref{dfp}. 
 The  distribution of $\Phi^{(n)}_x$ admits the following equivalent description.
\begin{claim}[Only the last landlord notice matters]\label{claim_alter_kappa} Fix a village $x \in V$. 
 Sample $n$ i.i.d.\ Bernoulli$(\frac{1}{1+\lambda_x })$ random variables $\widetilde{\kappa}_i, i \in [n]$ independently of all  stack instructions.
 Define the sum $\widetilde{\Phi}^{(n)}_x$ as in \eqref{dfp} but replace $J_x$
by $\widetilde{J}_x=\sum_{i \in \mathcal{A}_x} \widetilde{\kappa}_i$. Then $\Phi^{(n)}_x$ and $\widetilde{\Phi}^{(n)}_x$ have the same distribution.
\end{claim}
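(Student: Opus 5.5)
The plan is to condition on everything that determines the set $\mathcal{A}_x$ and the quantities $T_{x,i}$, and then show that, conditionally, the variables $\kappa_{N_{x,i}+1,(x,i)}$, $i\in\mathcal{A}_x$, are i.i.d.\ Bernoulli$(\frac{1}{1+\lambda_x})$.

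Let $\mathcal{G}$ be the $\sigma$-algebra generated by the airplane tickets $\zeta$ and the taxi tickets $\gamma$. The quantities $I_x$, $\mathcal{A}_x$, $A_x$, $Q_x$ and $T_{x,i}$ are all $\mathcal{G}$-measurable. The term $\lfloor \sigma_x n\rfloor$ is deterministic. The landlord stacks $(\kappa_{j,(x,i)})_{j\ge1}$, $i\in[n]$, are independent of $\mathcal{G}$ and of each other. Consequently $\Phi^{(n)}_x$ is a fixed function of $\mathcal{G}$-measurable data plus $J_x=\sum_{i\in\mathcal{A}_x}\kappa_{N_{x,i}+1,(x,i)}$, and likewise $\widetilde\Phi^{(n)}_x$ is the same function with $\widetilde J_x$ in place of $J_x$. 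It therefore suffices to show that, conditionally on $\mathcal{G}$, the vector $(\kappa_{N_{x,i}+1,(x,i)})_{i\in\mathcal{A}_x}$ has the same law as $(\widetilde\kappa_i)_{i\in\mathcal{A}_x}$. Given this, the conditional laws of $J_x$ and $\widetilde J_x$ agree; integrating against the law of $\mathcal{G}$ shows that $\Phi^{(n)}_x$ and $\widetilde\Phi^{(n)}_x$ have the same law.

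The key step concerns a single house. Fix $i$ and a $\mathcal{G}$-measurable value $T=T_{x,i}\ge1$, which we may treat as deterministic after conditioning. Since $N_{x,i}$ is the time of the $(T-1)$-th success in the i.i.d.\ Bernoulli sequence $(\kappa_{j,(x,i)})_j$, it is a stopping time for the natural filtration $\mathcal{F}_K=\sigma(\kappa_{1,(x,i)},\dots,\kappa_{K,(x,i)})$, and it is almost surely finite because the success probability $\frac{1}{1+\lambda_x}$ is positive. Note that $N_{x,i}=0$ when $T=1$. By the strong Markov property (the i.i.d.\ case can also be checked directly by summing over $\{N_{x,i}=K\}\in\mathcal{F}_K$), the variable $\kappa_{N_{x,i}+1,(x,i)}$ is independent of $\mathcal{F}_{N_{x,i}}$ and has law Bernoulli$(\frac{1}{1+\lambda_x})$. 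This conditional law does not depend on the value of $T$.

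Independence across houses follows because distinct houses use disjoint, independent stacks and $\mathcal{G}$ is independent of all of them. Hence, conditionally on $\mathcal{G}$, the variables $\kappa_{N_{x,i}+1,(x,i)}$, $i\in\mathcal{A}_x$, are independent Bernoulli$(\frac1{1+\lambda_x})$, which is exactly the conditional law of $(\widetilde\kappa_i)_{i\in\mathcal{A}_x}$.

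The only delicate point is that $N_{x,i}$ is random and depends on $T_{x,i}$. This is handled by conditioning on $\mathcal{G}$ first and then applying the stopping-time argument within each house's stack, where the stopping rule is fixed.
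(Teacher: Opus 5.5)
Your proof is correct and follows essentially the same route as the paper. Both condition on the $\sigma$-algebra generated by the airplane and taxi tickets, use that $N_{x,i}$ is a stopping time for the landlord stack of house $(x,i)$, and conclude that the notices $\kappa_{N_{x,i}+1,(x,i)}$, $i\in\mathcal{A}_x$, are conditionally i.i.d.\ Bernoulli$(\frac{1}{1+\lambda_x})$. The only cosmetic difference is that the paper states the stopping-time property jointly over all houses, whereas you argue house by house and then use the independence of the stacks.
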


\begin{proof} The random variable $Q_x$, the random set $\mathcal{A}_x$ and the random variables $T_{x,i}, \, i \in [n]$ are all measurable with respect to the sigma-algebra $\mathcal{F}^{(n)}_{\zeta,\gamma}$ generated by  the airplane ticket and taxi ticket stacks.
Furthermore, the random variables
$N_{x,i}, \, i \in [n]$ (cf.\ \eqref{def_eq_N_xi}) have the following joint stopping time property:
for any collection $\left(n_{i}\right)_{i \in [n]} \in \mathbb{N}_0^{[n]}$ of non-negative integers, 
the event $\{ N_{x,i}=n_i, \, i \in [n] \}$ is determined by $\mathcal{F}^{(n)}_{\zeta,\gamma}$ and the landlord instructions 
$\left( \kappa_{j,(x,i)}, \, i \in [n], \, j=1,\dots, n_i \right)$. Thus, conditional on $\mathcal{F}^{(n)}_{\zeta,\gamma}$ and 
$\{ N_{x,i}=n_i, \, i \in [n] \}$, the random variables $\left( \kappa_{N_{x,i}+1,(x,i)}  \right)_{i \in \mathcal{A}_x}$ are i.i.d.\ 
with Bernoulli$(\frac{1}{1+\lambda_x })$ distribution. Comparing the definitions of $J_x$ (cf.\ \eqref{def_eq_J_x}) and $\widetilde{J}_x$, the statement of Claim \ref{claim_alter_kappa} follows.
\end{proof}

Next we bound the fluctuation of $\Phi^{(n)}_x$ if we condition on the event that the number $I_x$ of taxi tickets sold in village $x$ is equal to some  $u \in \mathbb{N}_0$. We will use the shorthand 
\begin{equation}\label{def_eq_E_u_P_u}
\mathbb{P}_u(\cdot):=\mathbb{P}( \, \cdot \, | \, I_x=u ), \qquad \mathbb{E}_u(\cdot):=\mathbb{E}( \, \cdot \, | \, I_x=u ).
\end{equation}

With Claim \ref{claim_alter_kappa} in hand, we are in a position to apply McDiarmid's inequality.  Note that for all $u$, there exists a function $f: [n]^u \times \{0,1 \}^n \to \mathbb{N}_0$ such that conditionally on $\{I_x = u\}$, the value of $\widetilde{\Phi}^{(n)}_x$ is obtained by applying $f$ to the $u$ taxi tickets sold (i.e., $\gamma_{1,x}, \dots, \gamma_{u,x}$)  and the random variables $\widetilde{\kappa}_i, \, i \in [n]$. Moreover, this $f$ has bounded differences: changing any particular taxi ticket can change the resulting outflux by at most $3$ (for example, one worst case is changing one taxi ticket from a house where the last landlord notice was a sleep to an unvisited house with an initial sleeping particle which then ejects two particles), while changing any of the $\widetilde{\kappa}_i$'s changes the outflux by at most $1$. Thus, by  McDiarmid's inequality, for all $u \in \mathbb{N}_0$ and $K \in \mathbb{R}$, we have
\begin{equation} \label{dev2}
\mathbb{P}_u\left(|\Phi^{(n)}_x-\mathbb{E}_u(\Phi^{(n)}_x )| \geq K  \right) \leq 2 \exp\left(-\frac{2}{9}\frac{K_+^2}{n+u}\right).
\end{equation}

\begin{claim}[Expected outflux given fixed influx] For any $x \in V$ and $u \in \mathbb{N}_0$ we have
\begin{equation}\label{Phi_cond_expect_on_u}
\mathbb{E}_u(\Phi^{(n)}_x) = n \cdot \left( \frac{\lfloor \sigma_x n \rfloor}{n} -\frac{\lambda_x}{1+\lambda_x}\right)\cdot \left(1-\left(1-\frac{1}{n}\right)^u \right)+u.
\end{equation}
\end{claim}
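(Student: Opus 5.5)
We compute $\mathbb{E}_u$ of each term in \eqref{dfp} separately, using linearity of expectation. By Claim \ref{claim_alter_kappa} we may replace $J_x$ by $\widetilde{J}_x=\sum_{i\in\mathcal{A}_x}\widetilde{\kappa}_i$ without changing the distribution of $\Phi^{(n)}_x$. Conditioning on $\{I_x=u\}$ is harmless for the village-$x$ variables. Indeed, $I_x$ is a function of the airplane tickets only. The taxi tickets $\gamma_{1,x},\dots,\gamma_{u,x}$ and the auxiliary $\widetilde{\kappa}_i$ are independent of the airplane tickets, so under $\mathbb{P}_u$ the tickets $\gamma_{1,x},\dots,\gamma_{u,x}$ are still i.i.d.\ uniform on $[n]$ and the $\widetilde\kappa_i$ are i.i.d.\ Bernoulli$(\frac{1}{1+\lambda_x})$, independent of the taxi tickets. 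The initial sleepy indicators $\sigma_{x,i}$ are deterministic, with exactly $\lfloor\sigma_x n\rfloor$ of them equal to $1$.

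The key quantity is the probability $p_u:=(1-\frac1n)^u$ that a fixed house $i$ receives none of the $u$ taxi tickets. The terms are then computed as follows.
\begin{itemize}
\item $\mathbb{E}_u(I_x)=u$.
\item $\mathbb{E}_u(A_x)=n(1-p_u)$, since $A_x$ is the number of visited houses.
\item $\mathbb{E}_u(Q_x)=\lfloor\sigma_x n\rfloor\, p_u$, summing over the $\lfloor\sigma_x n\rfloor$ initially occupied houses.
\item $\mathbb{E}_u(\widetilde{J}_x)=\frac{1}{1+\lambda_x}\,\mathbb{E}_u(A_x)=\frac{n}{1+\lambda_x}(1-p_u)$. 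This uses independence of the $\widetilde\kappa_i$ from $\mathcal{A}_x$: write $\widetilde J_x=\sum_{i=1}^n \mathbb{1}\{i\in\mathcal{A}_x\}\widetilde\kappa_i$ and take expectations termwise.
\end{itemize}

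Substituting into \eqref{dfp} gives
\[
\mathbb{E}_u(\Phi^{(n)}_x)=\lfloor\sigma_x n\rfloor(1-p_u)+u-n(1-p_u)+\tfrac{n}{1+\lambda_x}(1-p_u)
=n\Big(\tfrac{\lfloor\sigma_x n\rfloor}{n}-\tfrac{\lambda_x}{1+\lambda_x}\Big)(1-p_u)+u,
\]
which is \eqref{Phi_cond_expect_on_u}.

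The only delicate step is justifying the conditional distributions under $\mathbb{P}_u$. Two points need care. First, $I_x$ depends only on the airplane stacks $\zeta$, and hence is independent of the taxi stack $\gamma_{\cdot,x}$. Second, Claim \ref{claim_alter_kappa} must be applied in a form compatible with the conditioning. Its proof in fact gives equality of the conditional laws given $\mathcal{F}^{(n)}_{\zeta,\gamma}$, which is finer than conditioning on $\{I_x=u\}$ since $I_x$ is $\mathcal{F}^{(n)}_{\zeta,\gamma}$-measurable. Therefore $\mathbb{E}_u(J_x)=\mathbb{E}_u(\widetilde J_x)$, and the expectation of $J_x$ may be computed with the $\widetilde\kappa_i$ in place of the landlord notices.
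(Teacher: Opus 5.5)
Your proposal is correct and follows the paper's proof. The paper likewise takes $\mathbb{E}_u$ of each term in \eqref{dfp}, computing $\mathbb{E}_u(Q_x)$, $\mathbb{E}_u(A_x)$ and $\mathbb{E}_u(J_x)$ (the last via Claim \ref{claim_alter_kappa}) and combining them by linearity. Your extra remarks fill in details the paper leaves implicit: for fixed $M$, $I_x$ depends only on the airplane stacks, and the proof of Claim \ref{claim_alter_kappa} gives equality of conditional laws given $\mathcal{F}^{(n)}_{\zeta,\gamma}$, so it survives conditioning on $\{I_x=u\}$.
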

\begin{proof} Recall from \eqref{dfp} that we have $\Phi^{(n)}_x = \lfloor \sigma_x n \rfloor -Q_x +I_x - A_x + J_x$.  We have
$\mathbb{E}_u(Q_x)=\lfloor \sigma_x n \rfloor \cdot (1-\frac{1}{n})^u $
by \eqref{def_eq_quiet_houses_in_x}, $\mathbb{E}_u(A_x)=n \cdot (1-(1-\frac{1}{n})^u ) $ by \eqref{def_eq_active_houses_in_x}, and $\mathbb{E}_u(J_x)= n \cdot (1-(1-\frac{1}{n})^u )  \frac{1}{1+\lambda_x}$ by 
\eqref{def_eq_J_x} and Claim \ref{claim_alter_kappa}. Using the linearity of expectation to put these ingredients together, we obtain
\eqref{Phi_cond_expect_on_u}.
\end{proof}

Using  $\left| (1-\frac{1}{n})^u -e^{-u/n} \right| \leq \frac{u}{n^2}$ and $0 \leq \sigma_x \leq \frac{\lambda_x}{1+\lambda_x}$, we obtain 
\begin{equation}
\left| \left( \sigma_x   -\frac{\lambda_x}{1+\lambda_x}\right)\cdot \left(1-e^{-u/n} \right)+\frac{u}{n} -  \frac{1}{n}\mathbb{E}_u(\Phi^{(n)}_x) \right| \leq \frac{1}{n} +\frac{u}{n^2}.
\end{equation}
Thus, recalling \eqref{fixptfun} and \eqref{def_eq_beta},   using that $0 \leq \sigma_x \leq \frac{\lambda_x}{1+\lambda_x}$ we obtain that if $K \geq 0$ then
\begin{equation}\label{E_u_Phi_n_approx}
\left|u -n \beta( m )_x \right| \leq K \; \implies \;  \left|  \frac{1}{n}\mathbb{E}_u(\Phi^{(n)}_x) - \phi\left( m \right)_x   \right|  \leq \frac{\beta( m )_x}{n}+ \frac{1}{n} + \frac{2 K}{n}. 
\end{equation}

We are now ready to prove \eqref{eq_concentration_bound}. Having fixed
 $a \in \mathbb{R}_+$ and $M \in \mathbb{N}_0^{V}$, let 
\begin{equation}\label{K_def_eq}
    K:=  \frac{a n - \Vert \nu \Vert_1 - \Vert M \Vert_1/n -1 }{3}, \qquad U:=\left\{ u \in \mathbb{N}_0 \, : \, \left|u- \beta( m )_x \cdot n \right| \leq  K \right\}.
\end{equation} 
Recalling \eqref{EI_x_nbetax} we see that if $\left|I_x- \beta( m )_x \cdot n\right| \geq K$ implies $\left|I_x- \mathbb{E}(I_x) \right| \geq K-1$, thus 
\begin{multline}\label{restrict_to_well_behaved_u}
   \p\left( \left|\phi^{(n)}\left( m \right)_x - \phi\left( m \right)_x \right| \geq a \right) \stackrel{\eqref{dev1}}{\leq} \\
  \p\left( \left|\phi^{(n)}\left( m \right)_x - \phi\left( m\right)_x \right| \geq a, \; I_x \in U \right) +  2\exp\left(-\frac{2(K-1)_+^2}{  \Vert M\Vert_1}\right).
\end{multline}
Recalling that by \eqref{def_eq_scaled_phi} we have $\phi^{(n)}\left( m \right)_x=\frac{1}{n}\Phi^{(n)}_x$, we bound
\begin{multline}\label{split_bound_u}
   \p\left( \left|\phi^{(n)}\left( m \right)_x - \phi\left( m \right)_x \right| \geq a, \; I_x \in U \right) = \\
   \sum_{u \in U} \mathbb{P}_u\left( \left| \frac{1}{n}\Phi^{(n)}_x - \phi\left( m \right)_x \right| \geq a \right) \mathbb{P}(I_x=u) \stackrel{(*)}{\leq} \\
  \sum_{u \in U} \mathbb{P}_u\left( \left| \frac{1}{n}\Phi^{(n)}_x - \frac{1}{n}\mathbb{E}_u(\Phi^{(n)}_x) \right| \geq \frac{K}{n} \right)  \mathbb{P}(I_x=u) \stackrel{\eqref{dev2}}{\leq} \\ \sum_{u \in U}
 2 \exp\left(-\frac{2}{9}\frac{K_+^2}{n+u}\right) \mathbb{P}(I_x=u) \stackrel{(**)}{\leq} 
2  \exp\left(-\frac{2}{9}\frac{K_+^2}{n + \Vert \nu \Vert_1 n+ \Vert M \Vert_1 +K }\right),
 \end{multline}
where in  $(*)$ we used that $\beta( m )_x \leq \Vert \nu \Vert_1 + \frac{1}{n} \Vert M \Vert_1  $, thus for any $u \in U$ we have
\begin{equation}
   a- \left| \frac{1}{n}\mathbb{E}_u(\Phi^{(n)}_x) - \phi\left( m \right)_x   \right| \stackrel{\eqref{E_u_Phi_n_approx}}{\geq} a - \frac{\Vert \nu \Vert_1}{n} - \frac{\Vert M \Vert_1}{n^2} -\frac{1}{n} - \frac{2K}{n} \stackrel{\eqref{K_def_eq}}{=} \frac{3K}{n}-\frac{2K}{n} = \frac{K}{n}, 
\end{equation}
and in $(**)$ we used $\sum_{u \in U} \mathbb{P}(I_x=u) \leq 1$ and  that $u \in U$ implies  $ u \leq \Vert \nu \Vert_1 \cdot n+ \Vert M \Vert_1 +K $.

Putting the bounds \eqref{restrict_to_well_behaved_u} and \eqref{split_bound_u} together, also using the definition \eqref{K_def_eq} of $K$ and the union bound over $x \in V$ we obtain \eqref{eq_concentration_bound}.

{\bf Acknowledgements.} 
The authors would like to thank the Erd\H{o}s Center for sponsoring the Activated Random Walk 
focused workshop in May, 2025 where this research was initiated. 
J.R. was partially supported by a Simons Foundation Targeted Grant awarded
to the HUN-REN Alfréd Rényi Institute of Mathematics when this research was conducted. 
 M.S.\ was partially supported by the grant NKFI-FK-142124 of NKFI
(National Research, Development and Innovation Office). B.R.\ was partially supported by the grants ERC Synergy Grant No. 810115-DYNASNET and ADVANCED-150474 of NKFI. We thank Antal A.\ J\'arai and D\'aniel Keliger for valuable discussions.

\end{document}